\theoremstyle{plain}
\newtheorem{theorem}{Theorem}
\newtheorem{proposition}[theorem]{Proposition}
\newtheorem{conjecture}[theorem]{Conjecture}
\theoremstyle{definition}
\newtheorem{definition}[theorem]{Definition}
\theoremstyle{remark}
\newtheorem*{remark}{Remark}
\newcommand{\RiemannSphere}{\widehat{\mathbb{C}}}
\begin{document}

\title{Conformal models and fingerprints of pseudo-lemniscates}

\date{February 24 2015}
\author[T. Richards]{Trevor Richards}
\address{Department of Mathematics, Washington and Lee University, Lexington, VA 24450-2116, United States.}
\email{richardst@wlu.edu}

\author[M. Younsi]{Malik Younsi}
\thanks{Second author supported by NSERC}
\address{Department of Mathematics, Stony Brook University, Stony Brook, NY 11794-3651, United States.}
\email{malik.younsi@gmail.com}

\keywords{Meromorphic functions, conformal welding, conformal models, pseudo-lemniscates, fingerprints}
\subjclass[2010]{primary 30C35; secondary 37E10}
\begin{abstract}

We prove that every meromorphic function on the closure of an analytic Jordan domain which is sufficiently well-behaved on the boundary is conformally equivalent to a rational map whose degree is smallest possible. We also show that the minimality of the degree fails in general without the boundary assumptions. As an application, we generalize a theorem of Ebenfelt, Khavinson and Shapiro by characterizing fingerprints of polynomial pseudo-lemniscates.

\end{abstract}

\maketitle

The starting point of this paper is the following conjecture on the behavior of holomorphic functions on the closed unit disk.

\begin{conjecture}
\label{ConjRichards}
Let $f$ be holomorphic on a neighborhood of the closure of the unit disk $\mathbb{D}$. Then there is an injective holomorphic function $\phi : \mathbb{D} \to \mathbb{C}$ and a polynomial $p$ such that
$$f=p \circ \phi$$
on $\mathbb{D}$.
\end{conjecture}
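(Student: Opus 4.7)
My plan is to first identify the expected degree of $p$ via a critical-point count, and then to produce $(\phi, p)$ by approximation from the ``well-behaved boundary'' case handled by the paper's main theorem. Let $n$ denote the total multiplicity of the critical points of $f$ lying in $\mathbb{D}$. If $f = p \circ \phi$ with $\phi$ univalent on $\mathbb{D}$, the chain rule $f' = (p'\circ\phi)\phi'$ together with $\phi' \neq 0$ on $\mathbb{D}$ yields a bijection (with multiplicity) between the critical points of $f$ in $\mathbb{D}$ and the critical points of $p$ that fall inside $\phi(\mathbb{D})$. Hence $\deg p \geq n+1$, with equality precisely when every critical point of $p$ is captured inside $\phi(\mathbb{D})$; this is the natural target, and the value we should aim to realize.

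The construction I would attempt proceeds by approximation. For small $\eps > 0$ set $f_\eps(z) := f((1-\eps)z)$, which extends holomorphically well past $\partial\mathbb{D}$; for generic $\eps$ one expects $f_\eps$ to satisfy the boundary hypotheses of the paper's main theorem. Applying that theorem to $f_\eps$ produces a conformal $\phi_\eps \colon \mathbb{D} \to \Omega_\eps$ and a rational $p_\eps$ of minimal degree with $f_\eps = p_\eps \circ \phi_\eps$. After normalizing $\phi_\eps(0) = 0$, $\phi_\eps'(0) > 0$, and $p_\eps$ with a canonical leading term, one would extract a subsequential limit $(\phi, p)$ by normal-families compactness as $\eps \to 0^+$, and hope that the resulting factorization $f = p \circ \phi$ holds on $\mathbb{D}$.

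The hardest steps are (i) upgrading from a rational $p_\eps$ to a polynomial, and (ii) ensuring nondegeneracy in the limit. For (i), since $f_\eps$ is holomorphic on $\overline{\mathbb{D}}$, any poles of $p_\eps$ automatically lie outside $\phi_\eps(\overline{\mathbb{D}})$; the question is whether one can arrange the factorization so that $p_\eps$ has no finite poles at all, which holds after a M\"obius change of target coordinates when $p_\eps$ has a single pole but demands more care in general. For (ii), one must prevent $\phi_\eps$ from collapsing (e.g.\ $\phi_\eps'(0) \to 0$) and prevent the critical points of $p_\eps$ from escaping $\phi_\eps(\mathbb{D})$, which would force the degree to jump. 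Both issues reduce to a uniform lower bound on the conformal radius of $\Omega_\eps$, and I expect this to be the main analytic difficulty --- presumably also the reason the paper's main theorem imposes boundary regularity, without which the sheet structure of $f$ near $\partial\mathbb{D}$ may be too wild for a polynomial model to survive the limit.
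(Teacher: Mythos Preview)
The paper does not itself prove Conjecture~\ref{ConjRichards}. It explicitly attributes the full result to Lowther and Speyer, whose argument (cited as \cite{RIC}) uses Lagrange interpolation together with the inverse mapping theorem, and it states that its own Theorem~\ref{MainThm} is ``slightly weaker.'' So there is no in-paper proof to match, and your plan to derive the conjecture from Theorem~\ref{MainThm} is attempting something the authors deliberately did not claim.

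The approximation scheme has a genuine gap at the first step. Theorem~\ref{MainThm} requires not only that $f_\eps$ have no critical point on $\partial\mathbb{D}$ (which is generic in $\eps$) but also that $f_\eps(\partial\mathbb{D})$ be a \emph{Jordan curve}. This is not a generic condition: it forces the immersed loop $\theta\mapsto f((1-\eps)e^{i\theta})$ to trace out a simple closed curve, possibly covered several times, and for most $f$ this fails for \emph{every} small $\eps$. For instance, take $f(z)=z^2+cz^3$ with $0<|c|<1$. On any circle $r\,\mathbb{T}$ with $r\le 1$ the winding number of $f$ about $0$ is $2$, so if $f(r\,\mathbb{T})$ were a Jordan curve it would have to be covered exactly twice; but $|f(re^{i\theta})|=r^2|1+cre^{i\theta}|$ attains each non-extremal value exactly twice on $[0,2\pi)$, which is incompatible with a two-fold cover of a nonconstant-modulus Jordan curve (that would force each value to be hit at least four times). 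Hence Theorem~\ref{MainThm} never applies to $f_\eps$, and the approximation never gets started. This obstruction is exactly why the paper's method does not reach the full conjecture.

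Two smaller points. Your worry~(i) about finite poles is unnecessary: the Remark following the proof of Theorem~\ref{MainThm} already says that when $f$ is holomorphic the model is automatically a polynomial. And your ``natural target'' degree $n+1$ (one more than the number of critical points of $f$ in $\mathbb{D}$) is in general unachievable: in the paper's Section~2 example the function $f_4$ has \emph{no} critical points in $D_4$, so your bound gives $\deg p\ge 1$, yet the paper proves any polynomial model must have degree at least $4$. Thus even if the limiting argument could be made to run, there is no reason to expect it to land on degree $n+1$.
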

In this case, we say that $f$ and $p$ are \textit{conformally equivalent} and that $p$ is a \textit{conformal model} for $f$ on $\mathbb{D}$. Note that $\mathbb{D}$ can be replaced by any Jordan domain, by the Riemann mapping theorem.

The above was conjectured by the first author in~\cite{RI}, motivated by questions arising from the study of level curve configurations of meromorphic functions. The case where $f$ is a finite Blaschke product follows from the work of Ebenfelft, Khavinson and Shapiro~\cite{EKS} on the characterization of the so-called fingerprints of polynomial lemniscates. Proofs of this special case were also given independently by the first author in~\cite{RI} using level curves, and by the second author in~\cite{YOU} using a simple Riemann surfaces welding argument combined with the uniformization theorem.

Conjecture \ref{ConjRichards} was proved in full generality by George~Lowther and David~Speyer on the internet mathematics forum \textit{math.stackexchange.com}, using Lagrange interpolation and the inverse mapping theorem (see \cite{RIC}). Our main motivation for this paper comes from the observation that the proof in~\cite{YOU} of the finite Blaschke product case can easily be generalized to obtain a proof of Conjecture~\ref{ConjRichards}, with some additional assumptions on the behavior of the function $f$ on $\partial \mathbb{D}$. Although this approach yields a slightly weaker result than that of Lowther and Speyer, it has considerable advantages. For instance, the argument also works in the meromorphic setting, in which case the polynomial $p$ must be replaced by a rational map $r$. Furthermore, the proof is more enlightening in the sense that it gives some information on the rational map $r$ thereby obtained.

More precisely, our main result is the following.

\begin{theorem}
\label{MainThm}
Let $D$ be a Jordan domain with analytic boundary and let $f$ be a meromorphic function on some open neighborhood of the closure of $D$. Assume that $f$ has no critical point on $\partial D$ and that $f(\partial D)$ is a Jordan curve whose bounded face contains $0$. Then there is an injective holomorphic function $\phi:D \to \mathbb{C}$ and a rational map $r$ of degree $\max(M,N)$ such that
$$f=r \circ \phi$$
on $D$, where $M$ and $N$ are the number of zeros and the number of poles of $f$ in $D$, counting multiplicity.

The rational map $r$ may be chosen so that all of its zeros and poles are in $\phi(D)$, except for a pole of multiplicity $M-N$ at $\infty$ if $M \geq N$ or a zero of multiplicity $N-M$ at $\infty$ if $N \geq M$.
\end{theorem}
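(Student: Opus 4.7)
The plan is a Riemann surfaces welding argument in the spirit of~\cite{YOU}: I build an abstract compact genus-zero Riemann surface $X$ by welding $\overline{D}$ to an auxiliary closed disk, extend $f$ to a meromorphic function $F$ on $X$, and invoke the uniformization theorem to transport $F$ to a rational map $r$ on $\RiemannSphere$; the map $\phi$ will be the restriction of the uniformizing biholomorphism to $D$.

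Assume $M \geq N$; the opposite case is analogous. The argument principle shows that the winding number of $\Gamma := f(\partial D)$ around $0$ equals $M-N$, so $f|_{\partial D}:\partial D \to \Gamma$ is a covering of degree $d := M-N$. Let $\Omega_+$ denote the unbounded face of $\Gamma$ in $\RiemannSphere$. Since $\partial D$ is analytic and $f$ has no critical points on $\partial D$, $\Gamma$ is itself an analytic Jordan curve, so there is a biholomorphism $\psi : \overline{\UnitDisk} \to \overline{\Omega_+}$ with $\psi(0) = \infty$, which extends biholomorphically across $\partial \UnitDisk$ by Schwarz reflection. Put $f^+(z) := \psi(z^d)$: a $d$-fold branched cover $\overline{\UnitDisk} \to \overline{\Omega_+}$, ramified only over $\infty$ at the point $0$. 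Since any two $d$-fold covers of $S^1$ are equivalent, there is an orientation-reversing homeomorphism $h:\partial D \to \partial \UnitDisk$ with $f = f^+\circ h$, and welding $\overline{D}$ to $\overline{\UnitDisk}$ via $h$ produces a topological sphere $X$ on which $f$ and $f^+$ combine into a well-defined continuous $F:X \to \RiemannSphere$.

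The key step is upgrading the topological welding to a holomorphic one. Near any $p\in\partial D$, both $f$ and $f^+$ extend to local biholomorphisms across the respective analytic boundaries (using the no-critical-points hypothesis), so $h$ itself extends to a biholomorphism $(f^+)^{-1}\circ f$ between neighborhoods of $p$ and $h(p)$ in $\ComplexPlane$; this extension supplies compatible charts across the welding curve and makes $X$ a Riemann surface on which $F$ is manifestly meromorphic. Once this is done, uniformization provides a biholomorphism $\Phi : X \to \RiemannSphere$, which I normalize by post-composing with a M\"obius transformation so that $\Phi$ sends $0 \in \overline{\UnitDisk}\subset X$ to $\infty$. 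Setting $\phi := \Phi|_D$ and $r := F\circ\Phi^{-1}$ then yields $f = r\circ\phi$ with $r$ a rational map. Counting preimages of generic points in $\Omega_\pm$ via the argument principle shows $\deg r = \deg F = M = \max(M,N)$; and the zeros and poles of $F$ consist of exactly the $M$ zeros and $N$ poles of $f$ in $D$ together with a single pole of multiplicity $d$ at $0 \in \overline{\UnitDisk}$, which becomes the advertised pole of multiplicity $M-N$ at $\infty$ after applying $\Phi$. The hard part will be precisely this holomorphic compatibility of the welding, which is where analyticity of $\partial D$ and the non-criticality of $f$ on $\partial D$ are essentially used.
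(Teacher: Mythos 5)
Your proposal is correct and is essentially the paper's own argument: the paper likewise welds the domain to a disk carrying the model map $z\mapsto z^{M-N}$ (transplanted to the unbounded face of $f(\partial D)$), obtains the welding homeomorphism from the equivalence of degree-$(M-N)$ coverings of the boundary curve, uses analyticity of the boundary and the no-critical-point hypothesis to make the welding conformal, and then applies uniformization and the same zero/pole bookkeeping. The only differences are cosmetic: the paper glues $\overline{U}$ to $\overline{V}$ along $E=f(\partial D)$ with a welding map $C$ satisfying $A\circ C=B$ and gets holomorphy of $F$ via Morera, whereas you glue $\overline{D}$ directly to an auxiliary closed disk and exhibit holomorphic charts through the extension $(f^{+})^{-1}\circ f$.
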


We mention that a similar conformal factorization result was obtained by Jeong and Taniguchi \cite{JEO} in the case where $f$ is a proper holomorphic mapping from a finitely connected domain $D$ onto the unit disk $\mathbb{D}$. Their main goal was to prove a conjecture of Steven Bell saying that every such domain $D$ is conformally equivalent to a domain of the form $r^{-1}(\mathbb{D})$ for some rational map $r$. See also \cite{FBY} for a uniqueness statement and applications to analytic capacity and Ahlfors functions.

Theorem \ref{MainThm} is proved in Section 1. Note that the degree of $r$ is best possible. It thus seems natural to expect that in Conjecture~\ref{ConjRichards}, one can also obtain a conformal model for $f$ whose degree depends only on the maximal number of preimages under $f$. However, we show in Section~2 that this is not the case.

Finally, in the last section of the paper, we use Theorem \ref{MainThm} to characterize the welding diffeomorphisms (or fingerprints) of \textit{polynomial pseudo-lemniscates}, that is, analytic Jordan curves of the form $p^{-1}(\Gamma)$ where $p$ is a polynomial and $\Gamma$ is an analytic Jordan curve. This generalizes the main theorem of~\cite{EKS}.

\section{Proof of Theorem \ref{MainThm} }
This section contains the proof of Theorem \ref{MainThm}. First, we need some notation.

Let $D$ and $f$ be as in Theorem \ref{MainThm}. We define $U$ and $V$ to be the bounded and unbounded faces of $E:=f(\partial D)$ respectively. We also let $g:\mathbb{D}\to U$ and $h:\mathbb{D}_+ \to V$ be conformal maps, normalized so that $g(0)=0$ and $h(\infty)=\infty$. Here $\mathbb{D}_+$ denotes the complement of the closed unit disk $\overline{\mathbb{D}}$ in the Riemann sphere $\RiemannSphere$. Note that since $E$ is analytic, the maps $g$ and $h$ extend to conformal maps on a neighborhood of the closure of their respective domains.

We now proceed with the proof of Theorem \ref{MainThm}.

\begin{proof}
We can assume that $D$ is the unit disk $\mathbb{D}$, replacing $f$ by $f \circ \psi$ where $\psi:\mathbb{D} \to D$ is conformal if necessary.  Moreover, we can suppose that $M \geq N$, since otherwise it suffices to replace $f$ by $1/f$. Let $n$ be the degree of the covering map $f:\mathbb{T} \to E$, where $\mathbb{T}=\partial \mathbb{D}$ is the unit circle. By the argument principle, we have $n=1/2\pi i \int_\mathbb{T} f'(\zeta)/f(\zeta) \, d\zeta=M-N$, so that $n \geq 0$. Furthermore, the fact that $f' \neq 0$ on $\mathbb{T}$ implies that $n \geq 1$. Define $A:=f \circ {g}^{-1} : \overline{U} \to \RiemannSphere$ and $B:=h \circ S \circ {h}^{-1} : \overline{V} \to \overline{V}$, where $S(z):=z^n$. Then the restrictions of $A$ and $B$ to $E$ are both covering maps of degree $n$ of $E$ onto itself. It follows from the basic theory of covering spaces that there exists a homeomorphism $C:E \to E$ such that $A \circ C = B$. Clearly, $C$ extends analytically to a neighborhood of $E$. Consider the Riemann surface $X:=\overline{U} \sqcup\overline{V} / \sim_C$ formed by welding conformally a copy of $\overline{V}$ to $\overline{U}$ using the analytic homeomorphism $C$ on $E$. Topologically, $X$ is the connected sum of $\overline{U}$ with the closed topological disk $\overline{V}$, so it is homeomorphic to a sphere. By the uniformization theorem, there exists a biholomorphism $G:X \to \RiemannSphere$ with $G(\infty)=\infty$.  Now, define $F : X \to \RiemannSphere$ by
$$
F(z) := \left\{ \begin{array}{rl} A(z)  &  \mbox{for } z\in \overline{U} \\ B(z) & \mbox{for }z \in \overline{V} \end{array} \right..
$$

Note that the map $F$ is well-defined because $A \circ C = B$ on $E$. Furthermore, it follows from Morera's theorem that $F$ is holomorphic on $X$, since $\partial U = E$ is an analytic curve. The composition $F \circ G^{-1} : \RiemannSphere \to \RiemannSphere$ is meromorphic and thus equal to a rational map $r$. On $U$, we have $r \circ G = F = A = f \circ {g}^{-1}$. Hence the existence part of the theorem follows by setting $\phi:=G \circ g$.

Finally, it follows directly from the construction that all of the zeros and poles of $r$ are in $\phi(\mathbb{D})$, except for a pole of multiplicity $M-N$ at $\infty$. Moreover, since $r=f \circ \phi^{-1}$ on $\phi(\mathbb{D})$, the poles of $r$ in $\mathbb{C}$ are the $N$ images of the poles of $f$ under $\phi$ (counting multiplicity), from which we deduce that $r$ has degree $M-N+N=M$.

This completes the proof of the theorem.

\end{proof}

\begin{remark}
If $f$ is holomorphic, then $N=0$ and all of the poles of $r$ are at $\infty$, so that $r=p$, a polynomial of degree $M$.
\end{remark}

\section{Degree of the conformal model}

In the previous section, we proved that every meromorphic function $f$ on the closure of an analytic Jordan domain $D$ which is  sufficiently well-behaved on the boundary of $D$ is conformally equivalent to a rational map $r$. Moreover, we also showed that $r$ can be taken so that its degree is smallest possible, namely the maximum between the number of zeros and the number of poles of $f$ in $D$. One can easily check that under the assumptions on $f$ in Theorem~\ref{MainThm}, this maximum also equals the maximal number of preimages (counting multiplicity) under $f$ in $D$ of any point in $\RiemannSphere$.

Assume now that $f$ is holomorphic on the closure of a Jordan domain. Even without any assumption on the boundary behavior of $f$, we know by the proof of Conjecture~\ref{ConjRichards} in \cite{RIC} that there exists a polynomial conformal model for $f$. Our goal in this section is to show that in this case, the polynomial cannot be taken in general to have smallest possible degree.

First, we make rigorous this notion of smallest possible degree in terms of the function $f$.

\begin{definition}
For any set $E \subset \RiemannSphere$ and any function $f:E \to \RiemannSphere$, we define the \textit{degree of non-injectivity} of $f$ in $E$ by
$$\operatorname{N}(f,E):= \sup_{w \in \RiemannSphere} n(w),$$
where $n(w)$ is the number of preimages of $w$ under $f$ in $E$, counted with multiplicity.
\end{definition}

Note that if $D \subset \mathbb{C}$ is a Jordan domain and $f$ is meromorphic and non-constant on some open neighborhood of $\overline{D}$, then $\operatorname{N}(f,D)$ is finite, by the identity theorem. Moreover, it is easy to see that if $r$ is a rational conformal model for $f$ on $D$, then $\deg(r)\geq \operatorname{N}(f,D)$. It has been a long-standing conjecture of the first author that any such function $f$ can be conformally modeled by a rational function (or a polynomial in the analytic case) of degree precisely equal to $\operatorname{N}(f,D)$.

However, in this section we give a counterexample to this conjecture by constructing, for each $n \geq 4$, a function $f_n$ which is analytic on the closure of a Jordan domain $D_n$ and satisfies $\operatorname{N}(f_n,D_n)=2$, but which cannot be conformally modeled on $D_n$ by a polynomial of degree less than $n$.  For simplicity, we will only describe the construction in the case $n=4$, and indicate the method of extension to higher values of $n$.

Note that we will use throughout this construction the general fact that if $f$ is meromorphic on a neighborhood of the closure of a Jordan domain $D$ and $r$ is a rational conformal model for $f$ on $D$ with conformal map $\phi:D\to\phi(D)$ satisfying $f=r\circ\phi$ on $D$, then the map $\phi$ extends continuously to $\partial D$ and the functional equation $f=r\circ\phi$ holds on $\partial D$ as well.  This fact may be readily checked by a routine analytic continuation argument.

\subsection{The Construction}

Define $f_4(z)=z^2(z+1)(z+3)$. Then the critical points of $f_4$ are $0$ and $(-3\pm\sqrt{3})/2$. In Figure~\ref{fig:D_4}, we see the critical level curves of $f_4$ (i.e. the level curves containing the critical points), along with a shaded region, denoted by $D_4$. The smooth pieces of $\partial D_4$ alternate between portions of level curves of $f_4$ and portions of gradient lines of $f_4$. Our goal is to establish the following facts :
\begin{enumerate}
    \item $\operatorname{N}(f_4,D_4)=2$.
    \item For any polynomial conformal model $p$ of $f_4$ on $D_4$, we have $\deg(p)\geq4$.
\end{enumerate}

\begin{figure}[htb!]
\centering
\includegraphics[width=0.75\textwidth]{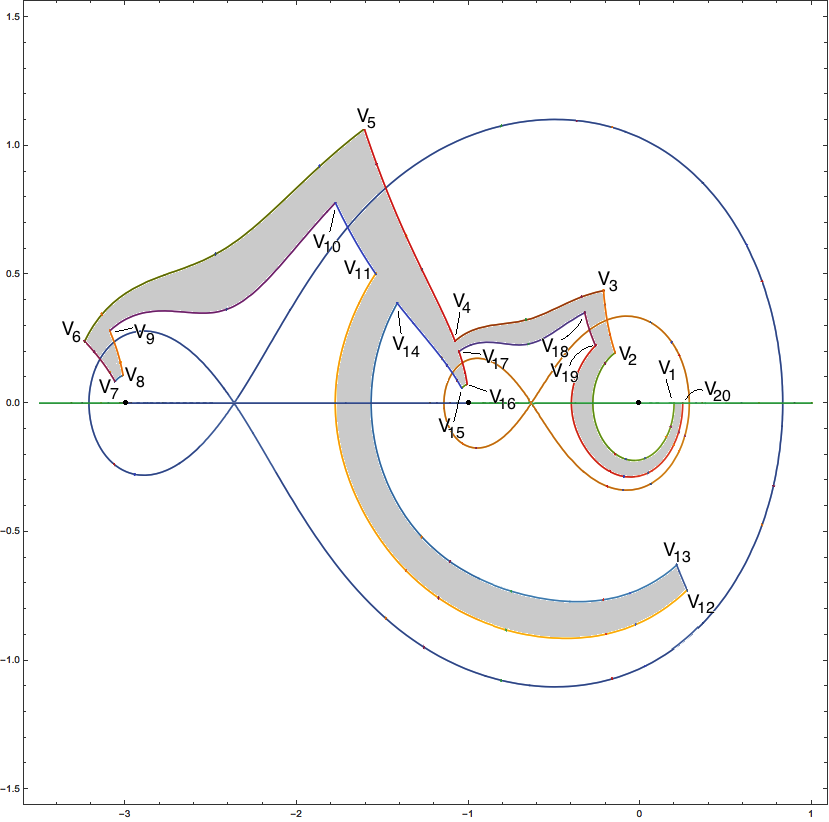}
\caption{The domain $D_4$.}
\label{fig:D_4}
\end{figure}

In order to establish these facts, it is important to determine the precise values of $|f_4|$ and $\arg(f_4)$ on each of the different pieces of $\partial D_4$. We begin by considering a single piece of $\partial D_4$, namely the one with endpoints $v_5$ and $v_6$, which we denote by $\overline{v_5v_6}$. First, we set up some notation.

\begin{definition}
Let $f$ be meromorphic on a domain $D$. Then for $\epsilon>0$ and $\alpha\in [0,2\pi)$, we define
$$\operatorname{Lev}(f,\epsilon)=\{z\in D:|f(z)|=\epsilon\}$$
and
$$\operatorname{Grad}(f,\alpha)=\{z\in D:\arg(f(z))=\alpha\}.$$
\end{definition}

Now, the piece $\overline{v_5v_6}$ is a portion of $\operatorname{Lev}(f_4,8)$. The endpoint $v_5$ is an intersection point between $\operatorname{Lev}(f_4,8)$ and $\operatorname{Grad}(f_4,\pi/2)$, while the other endpoint $v_6$ is an intersection point between $\operatorname{Lev}(f_4,8)$ and $\operatorname{Grad}(f_4,5\pi/3)$. The sets $\operatorname{Lev}(f_4,8)$, $\operatorname{Grad}(f_4,\pi/2)$, and $\operatorname{Grad}(f_4,5\pi/3)$ are depicted in Figure~\ref{fig:v_5v_6 isolated}.  In that figure, $\operatorname{Lev}(f_4,8)$ is the smooth closed curve, and the rays of $\operatorname{Grad}(f_4,\pi/2)$ and $\operatorname{Grad}(f_4,5\pi/3)$ are marked with their corresponding argument.

\begin{figure}[ht]
\centering
\includegraphics[width=0.75\textwidth]{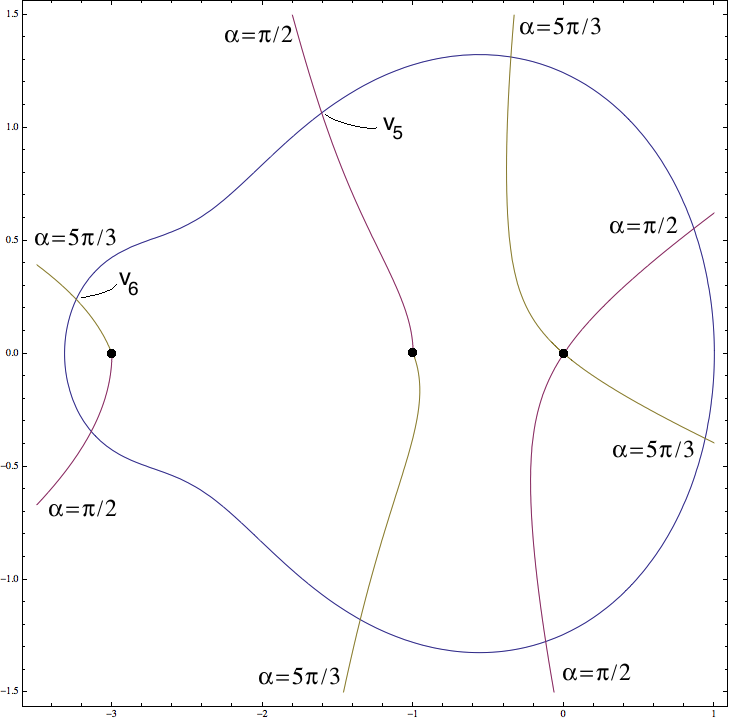}
\caption{The portion $\overline{v_5v_6}$.}
\label{fig:v_5v_6 isolated}
\end{figure}

Note that traversing the portion $\overline{v_5v_6}$ from $v_5$ to $v_6$ corresponds to traversing the level curve $\operatorname{Lev}(f_4,8)$ with positive orientation. In other words, if $z$ traverses $\overline{v_5v_6}$ from $v_5$ to $v_6$, then $\arg(f_4(z))$ is increasing. Since $\arg(f_4(v_5))=\pi/2$ and $\arg(f_4(v_6))=5\pi/3$, the total change in $\arg(f_4)$ along $\overline{v_5v_6}$ is $7\pi/6+2\pi k$ for some non-negative integer $k$. But since $\arg(f_4)$ does not take the value $\pi/2$ at any point of $\overline{v_5v_6}$ other than $v_5$, the total change in $\arg(f_4)$ along $\overline{v_5v_6}$ from $v_5$ to $v_6$ is exactly $7\pi/6$.

We apply a similar analysis to each level curve portion of $\partial D_4$ and record the resulting data in Table~1.  Note that for $\overline{v_1v_2}$ as well as five other of the level curve portions of $\partial D_4$, the total change in $\arg(f_4)$ is negative. This reflects the fact that, when $\overline{v_1v_2}$ is traversed from $v_1$ to $v_2$, the level set of $f_4$ which contains $\overline{v_1v_2}$ is being traversed with negative orientation.

\begin{table}[!ht]
\centering
\label{table: Function data.}\caption{Values of $f_4$ on $\partial D_4$.}

\begin{tabular}{ |l|l|l|l|l| }

  \hline
Segment&$|f_4|$&Total change in $\arg(f_4)$&Initial $\arg(f_4)$&Final $\arg(f_4)$\\
\hline
$\overline{v_1v_2}$&$0.15$&$-5\pi/2$&$0$&$3\pi/2$\\
\hline
$\overline{v_3v_4}$&$0.6$&$\pi$&$3\pi/2$&$\pi/2$\\
\hline
$\overline{v_5v_6}$&$8$&$7\pi/6$&$\pi/2$&$5\pi/3$\\
\hline
$\overline{v_7v_8}$&$2$&$-\pi/6$&$5\pi/3$&$3\pi/2$\\
\hline
$\overline{v_9v_{10}}$&$6$&$-5\pi/6$&$3\pi/2$&$2\pi/3$\\
\hline
$\overline{v_{11}v_{12}}$&$3$&$7\pi/3$&$2\pi/3$&$\pi$\\
\hline
$\overline{v_{13}v_{14}}$&$2$&$-7\pi/3$&$\pi$&$2\pi/3$\\
\hline
$\overline{v_{15}v_{16}}$&$0.15$&$-\pi/6$&$2\pi/3$&$\pi/2$\\
\hline
$\overline{v_{17}v_{18}}$&$0.47$&$-5\pi/6$&$\pi/2$&$5\pi/3$\\
\hline
$\overline{v_{19}v_{20}}$&$0.25$&$7\pi/3$&$5\pi/3$&$0$\\
\hline
\end{tabular}

\end{table}

We now use the data in Table 1 to show that $\operatorname{N}(f_4,D_4)=2$ and that $f_4$ cannot be conformally modeled on $D_4$ by a polynomial of degree less than $4$.

First, to prove that $\operatorname{N}(f_4,D_4)=2$, fix some value $k$ in $(0.15,8)$, which is the range of values of $|f_4|$ on $D_4$. We have to show that for every $\theta\in[0,2\pi)$, the function $f_4$ takes the value $ke^{i\theta}$ at most twice.

Assume that $k\in(0.15,0.25)$. There are two portions of $\operatorname{Lev}(f_4,k)$ in $D_4$, namely the one connecting $\overline{v_{2}v_{3}} \subset \operatorname{Grad}(f_4,3\pi/2)$ to $\overline{v_{1}v_{20}} \subset \operatorname{Grad}(f_4,0)$, which we denote by $L_1$, and the one connecting $\overline{v_{14}v_{15}} \subset \operatorname{Grad}(f_4,2\pi/3)$ to $\overline{v_{16}v_{17}} \subset \operatorname{Grad}(f_4,\pi/2)$, which we denote by $L_2$.

Now, note that the fact that $f_4$ has no critical point in $D_4$ implies that $\arg(f_4(z))$ is strictly monotonic as $z$ traverses either $L_1$ or $L_2$. Since the total change in $\arg(f_4)$ along $L_1$ is $-5\pi/2$ (i.e. the same as the total change in $\arg(f_4)$ along $\overline{v_1v_2}$), it follows that $\arg(f_4)$ takes each value in $[0,2\pi)$ at most twice on $L_1$. On the other hand, $\arg(f_4)$ does take some values twice on $L_1$, namely those in the interval $(3\pi/2,2\pi)$, from which we deduce that
$N(f_4,D_4)\geq2$. To prove the reverse inequality, first observe that on $L_2$, $\arg(f_4)$ takes each value in the interval $(\pi/2,2\pi/3)$ exactly once. Since this interval is disjoint from $(3\pi/2,2\pi)$, we get that $\arg(f_4)$ takes each value in $[0,2\pi)$ at most twice on the union $L_1\cup L_2$. In other words, the function $f_4$ takes the value $ke^{i\theta}$ at most twice in $D_4$, for each $\theta\in[0,2\pi)$. As every other value of $k\in[0.15,8]$ may be treated similarly, we conclude that $\operatorname{N}(f_4,D_4)=2$.

Suppose now that $p$ is a polynomial conformal model for $f_4$ on $D_4$.  Let $\phi:D_4\to\mathbb{C}$ be an injective holomorphic function such that $f_4=p\circ\phi$ on $D_4$. Let $\Lambda_1$ and $\Lambda_1'$ denote the (not necessarily distinct) components of $\operatorname{Lev}(p,0.15)$ containing $\phi\left(\overline{v_1v_2}\right)$ and $\phi\left(\overline{v_{15}v_{16}}\right)$ respectively. Also, let $\Lambda_2$ and $\Lambda_2'$ denote the components of $\operatorname{Lev}(p,2)$ containing $\phi\left(\overline{v_{13}v_{14}}\right)$ and $\phi\left(\overline{v_{7}v_{8}}\right)$ respectively.

Our goal is to show that $\Lambda_1\neq\Lambda_1'$ and $\Lambda_2\neq\Lambda_2'$.  Suppose for the moment that this is true.

Let $F_1$ denote the bounded face of $\Lambda_1$ incident to $\phi(\overline{v_1v_2})$. Since the absolute value of the change in $\arg(f_4)$ along $\overline{v_1v_2}$ is $5\pi/2$, it follows that the change in $\arg(p)$ along $\partial F_1$ is greater than or equal to $5\pi/2$, and thus $p$ has at least two zeros in $F_1$. Since $\Lambda_1\neq\Lambda_1'$, the face $F_1'$ of $\Lambda_1'$ which is incident to $\phi(\overline{v_{15}v_{16}})$ must be disjoint from $F_1$, and the maximum modulus theorem then implies that $F_1'$ contains at least one other zero of $p$ than those in $F_1$.

Now, let $F_2$ and $F_2'$ denote the bounded faces of $\Lambda_2$ and $\Lambda_2'$ which are incident to $\phi\left(\overline{v_{13}v_{14}}\right)$ and $\phi\left(\overline{v_7v_8}\right)$ respectively. Inspection of $D_4$ shows that $\overline{v_1v_2}$ and $\overline{v_{15}v_{16}}$ are both contained in the same bounded face of the level curve of $f_4$ which contains $\overline{v_{13}v_{14}}$ (namely the face which is incident to $\overline{v_{13}v_{14}}$), and therefore $\Lambda_1$ and $\Lambda_1'$ are both contained in the bounded face of $\Lambda_2$ which is incident to $\phi\left(\overline{v_{13}v_{14}}\right)$ (namely $F_2$). The maximum modulus theorem implies that $F_2'$ contains at least one zero and, since $\Lambda_2\neq\Lambda_2'$, $F_2$ and $F_2'$ are disjoint. In other words, the zero of $p$ in $F_2'$ is not one of the three zeros of $p$ already known to reside in $F_2$. We conclude that $\deg(p)\geq4$.

It thus remains to show that $\Lambda_1\neq\Lambda_1'$ (the fact that $\Lambda_2\neq\Lambda_2'$ can be proved by a similar argument).

Suppose, in order to obtain a contradiction, that $\Lambda_1=\Lambda_1'$. Let $\gamma_1$ be the negatively-oriented path in $\Lambda_1$ starting at $\phi(v_{16})$ and intersecting $\phi(\overline{v_1v_2})$ only at one point. Then since $|p \circ \phi| = |f_4| > 0.15$ on $\overline{D_4} \setminus (\overline{v_{1}v_{2}} \cup \overline{v_{15}v_{16}})$, we get that $\gamma_1$ is contained in the complement of $\phi\left(\overline{D_4}\right)$, except at the endpoints. Also, by analyticity of $p$, we have that $\arg{p(z)}$ is decreasing as $z$ traverses $\gamma_1$. The fact that the net change in $\arg(f_4)$ along $\overline{v_1v_2}$ is negative, as shown in Table 1, then implies that the other endpoint of $\gamma_1$ must be $\phi(v_1)$.

Consider now the closed path $\gamma_2$ starting at $\phi(v_{16})$ obtained by traversing in order $\gamma_1$, $\phi\left(\overline{v_1v_{20}}\right)$, $\phi\left(\overline{v_{20}v_{19}}\right)$, $\phi\left(\overline{v_{19}v_{18}}\right)$, $\phi\left(\overline{v_{18}v_{17}}\right)$ and $\phi\left(\overline{v_{17}v_{16}}\right)$. Note that since $|p|\leq0.47$ on $\gamma_2$ and $|p|>0.47$ at some points in $\phi(D_4)$, we get that $\phi(D_4)$ is contained in the unbounded face of $\gamma_2$.  Define $F_3$ to be the bounded face of $\gamma_2$ which is incident to $\phi\left(\overline{v_{17}v_{18}}\right)$.  We will show that the net change in $\arg(p)$ along $\gamma_2$ is negative. Since $\gamma_2$ traverses $\partial F_3$ with positive orientation and $p$ has no pole in $F_3$, this will contradict the argument principle.

Let $\delta\in\mathbb{R}$ be the net change in $\arg(p(z))$ as $z$ traverses the path $\gamma_1$.  As noted above, $\arg(p)$ is strictly decreasing along $\gamma_1$, so $\delta<0$. By examining the other portions of the path $\gamma_2$, we get that the net change in $\arg(p(z))$ as $z$ traverses $\gamma_2$ starting at $\phi(v_{16})$ is

$$\delta+0+\dfrac{-7\pi}{3}+0+\dfrac{5\pi}{6}+0=\delta-\dfrac{3\pi}{2}<0.$$
As mentioned above, this contradicts the analyticity of $p$.  It follows that $\Lambda_1\neq\Lambda_1'$.  This completes the proof that $f_4$ cannot be modeled on $D_4$ by a polynomial of degree less than $4$.

For the case $n>4$, it suffices to replace the polynomial $f_4(z)=z^2(z+1)(z+3)$ by a degree $n$ polynomial $f_n$ with a similar critical level curve configuration. More precisely, the polynomial $f_n$ should have a zero of multiplicity two at the origin and $n-2$ more simple zeros located on the negative real axis so that the critical level curve configuration consists of a nested sequence of figure-eight graphs. It was proved in~\cite{RI} that every possible critical level curve configuration is instantiated by some polynomial, so we may let $f_n$ be the polynomial with this configuration, and choose $D_n$ similarly as in the case $n=4$ above.

\section{Fingerprints of polynomial pseudo-lemniscates}

In this section, we describe how Theorem \ref{MainThm} can be applied to the conformal welding problem for polynomial pseudo-lemniscates.

Let $\Gamma$ be a smooth Jordan curve in the plane. Denote by $\Omega_{-}$ and $\Omega_{+}$ respectively the bounded and unbounded faces of $\Gamma$. Also, let $\phi_{-}:\mathbb{D} \to \Omega_{-}$ and $\phi_+ : \mathbb{D}_{+} \to \Omega_{+}$ be conformal maps (recall that $\mathbb{D}_{+}=\RiemannSphere \setminus \overline{\mathbb{D}}$). We assume that $\phi_{+}$ is normalized by $\phi_{+}(\infty)=\infty$ and $\phi_{+}'(\infty)>0$, the latter meaning that $\phi_{+}$ has a Laurent development of the form
$$\phi_{+}(z)=az+a_0+\frac{a_1}{z}+\dots$$
near $\infty$, for some $a>0$. With this normalization, the map $\phi_+$ is unique.

It is well-known that $\phi_{+}$ and $\phi_{-}$ extend to diffeomorphisms on the closure of their respective domains, so that we can consider the map $k_{\Gamma}:=\phi_{+}^{-1} \circ \phi_{-} : \mathbb{T} \to \mathbb{T}$, an orientation-preserving diffeomorphism of the unit circle $\mathbb{T}$ onto itself. The map $k_{\Gamma}$ is called the \textit{fingerprint} or \textit{conformal welding diffeomorphism} of $\Gamma$ and is uniquely determined up to precomposition with an automorphism of the unit disk, that is, a degree one Blaschke product. Furthermore, the fingerprint $k_\Gamma$ is invariant under translations and scalings of the curve $\Gamma$. We thus obtain a map $\mathcal{F}$ from the set of equivalence classes of smooth curves, modulo translations and scalings, into the set of equivalence classes of orientation-preserving diffeomorphisms of the unit circle onto itself, modulo automorphisms of $\mathbb{D}$.

The following result is a consequence of the so-called \textit{conformal welding theorem} of Pfluger \cite{PFL} based on the theory of quasiconformal mappings. It is also sometimes referred to as Kirillov's theorem.

\begin{theorem}
\label{WeldThm}
The map $\mathcal{F}$ is a bijection.
\end{theorem}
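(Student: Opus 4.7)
The plan is to verify the injectivity and surjectivity of $\mathcal{F}$ separately.

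For injectivity, I would suppose $\Gamma_1$ and $\Gamma_2$ are smooth Jordan curves with normalized conformal maps $\phi_\pm^{(j)}$ ($j=1,2$) and assume $k_{\Gamma_1} = k_{\Gamma_2}$ after precomposing one of them with an appropriate automorphism of $\mathbb{D}$. Define $\Psi:\RiemannSphere \to \RiemannSphere$ piecewise by $\Psi := \phi_-^{(2)} \circ (\phi_-^{(1)})^{-1}$ on the closure of the bounded face of $\Gamma_1$ and $\Psi := \phi_+^{(2)} \circ (\phi_+^{(1)})^{-1}$ on the closure of the unbounded face; the fingerprint equality is exactly what is needed for the two definitions to agree on $\Gamma_1$. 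The resulting map is a global homeomorphism of $\RiemannSphere$ that is holomorphic off $\Gamma_1$. Since $\Gamma_1$ is smooth and hence removable for bounded holomorphic functions (Morera's theorem applies across any analytic or $C^1$ arc), $\Psi$ extends to a M\"obius transformation. The normalizations $\phi_+^{(j)}(\infty)=\infty$ and $(\phi_+^{(j)})'(\infty)>0$ then force $\Psi(z)=az+b$ with $a>0$, proving $\Gamma_2$ differs from $\Gamma_1$ only by a translation and a positive scaling.

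For surjectivity, I would apply the measurable Riemann mapping theorem. Given an orientation-preserving diffeomorphism $k:\mathbb{T}\to\mathbb{T}$, first extend $k$ to a smooth quasiconformal homeomorphism $K:\overline{\mathbb{D}}\to\overline{\mathbb{D}}$. Define a Beltrami coefficient $\mu$ on $\RiemannSphere$ by $\mu = K_{\bar{z}}/K_z$ on $\mathbb{D}$ and $\mu \equiv 0$ on $\mathbb{D}_+$; one has $\|\mu\|_\infty<1$. Solving $\Phi_{\bar{z}}=\mu\,\Phi_z$ produces a quasiconformal homeomorphism $\Phi:\RiemannSphere\to\RiemannSphere$ normalized by $\Phi(\infty)=\infty$. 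Then $\phi_+ := \Phi|_{\mathbb{D}_+}$ is conformal (since $\mu \equiv 0$ there), while $\phi_- := \Phi\circ K^{-1}$ is conformal on $\mathbb{D}$, because the Beltrami coefficients of $\Phi$ and $K$ cancel under this composition. A short computation gives $\phi_+^{-1}\circ\phi_-|_\mathbb{T} = K^{-1}|_\mathbb{T} = k^{-1}$, so $\Gamma := \Phi(\mathbb{T})$ has fingerprint equivalent to $k^{-1}$ modulo $\mathrm{Aut}(\mathbb{D})$; applying the same construction to $k^{-1}$ then yields a curve with fingerprint $k$.

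The main obstacle is checking that the curve $\Gamma$ produced in the surjectivity step is actually smooth, not merely a quasicircle. I would arrange smoothness by choosing the extension $K$ so that $\mu$ is smooth up to $\mathbb{T}$ from the disk side, which is possible because $k$ is a smooth diffeomorphism of the circle. Interior regularity for the Beltrami equation then gives that $\Phi$ is smooth on $\mathbb{D}$ and (trivially, being conformal) on $\mathbb{D}_+$, and the boundary regularity of the conformal map $\phi_-=\Phi\circ K^{-1}$ up to $\mathbb{T}$ follows from the smoothness of $K$ combined with standard elliptic estimates, so that $\Gamma=\phi_-(\mathbb{T})$ inherits smoothness. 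Rather than redoing this delicate regularity analysis by hand, I would invoke Pfluger's conformal welding theorem, which bundles precisely the quasiconformal and boundary-regularity ingredients above into the statement that every orientation-preserving diffeomorphism of $\mathbb{T}$ is the fingerprint of a smooth Jordan curve, unique up to translation and scaling.
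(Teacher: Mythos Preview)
Your proposal is correct and aligns with what the paper does, which is essentially nothing: the paper does not prove Theorem~\ref{WeldThm} but simply attributes surjectivity to Pfluger's conformal welding theorem and remarks that injectivity follows from conformal removability of smooth Jordan curves. Your injectivity argument is exactly this removability argument spelled out, and your surjectivity sketch via the measurable Riemann mapping theorem is the standard quasiconformal proof underlying Pfluger's result, which you in any case invoke at the end for the boundary regularity.

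One minor cosmetic point: your construction produces a curve with fingerprint $k^{-1}$ rather than $k$, and you patch this by rerunning the argument with $k^{-1}$ in place of $k$. This is fine, but it is cleaner to extend $k$ to a quasiconformal self-map of $\overline{\mathbb{D}_+}$ instead (or equivalently to set $\phi_- := \Phi|_{\mathbb{D}}$ and $\phi_+ := \Phi \circ K$ with $K$ extending $k$ to $\mathbb{D}_+$), which yields fingerprint $k$ directly.
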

The injectivity of $\mathcal{F}$ follows easily from the fact that smooth Jordan curves are conformally removable, see e.g. \cite{YOU}. For more information on conformal welding, we refer the reader to \cite[Chapter 2, Section 7]{LEV} and \cite{HAM}.

In recent years, motivated mainly by applications to the field of computer vision and pattern recognition, the problem of recovering the curve $\Gamma$ from its fingerprint $k_\Gamma$ has been extensively studied. In practice, there are several numerical methods that can be used, see for instance \cite{SHM} and \cite{MAR}. In theory, however, it is very difficult to explicitly determine the curve corresponding to a given fingerprint. A more convenient approach is to study the fingerprints of curves belonging to various special families. This was instigated by Ebenfelt, Khavinson and Shapiro \cite{EKS}, who characterized the fingerprints of polynomial lemniscates. More precisely, let $p$ be a (complex) polynomial of degree $n$. The \textit{lemniscate} of $p$ is defined by
$$L(p):=\{z \in \RiemannSphere : |p(z)|=1\} = p^{-1}(\mathbb{T}).$$
We further define
$$\Omega_-(p):= \{ z \in \RiemannSphere : |p(z)|<1 \} = p^{-1}(\mathbb{D})$$
and
$$\Omega_+(p):= \{z \in \RiemannSphere : |p(z)|>1 \} = p^{-1}(\mathbb{D}_+).$$
It follows from the maximum modulus principle that $\Omega_+(p)$ is connected. We say that the lemniscate $L(p)$ is \textit{proper} if $\Omega_{-}(p)$ is connected (and therefore simply connected).

Assume now that $L(p)$ is proper and, without loss of generality, that the leading coefficient of $p$ is positive. Then $L(p)$ is a smooth Jordan curve and therefore has a fingerprint $k:\mathbb{T} \to \mathbb{T}$. The following result characterizes exactly which diffeomorphisms arise in this way.

\begin{theorem}
\label{thmEKS}
The fingerprint $k:\mathbb{T} \to \mathbb{T}$ of $L(p)$ is given by
$$k(z):=B(z)^{1/n},$$
where $B$ is a Blaschke product of degree $n$ whose zeros are the preimages of the zeros of $p$ under a conformal map of $\mathbb{D}$ onto $\Omega_-(p)$, counted with multiplicity. Conversely, given any Blaschke product $B$ of degree $n$, there is a polynomial $p$ of the same degree with positive leading coefficient such that the lemniscate $L(p)$ is proper and has $k=B^{1/n}$ as its fingerprint. Moreover, the polynomial $p$ is unique up to precomposition with a linear map of the form $T(z)=az+b$, where $a>0$ and $b \in \mathbb{C}$.
\end{theorem}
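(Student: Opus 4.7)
The plan is to use Theorem \ref{MainThm} as the engine for the converse direction, while the first (forward) direction is a direct computation using proper holomorphic self-maps of $\mathbb{D}$ and $\mathbb{D}_+$.

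First I would handle the forward direction. Given a proper lemniscate $L(p)$, the face $\Omega_-(p)$ is a Jordan domain with analytic boundary, and pulling back $p$ through $\phi_-$ produces a proper holomorphic map $B := p \circ \phi_- : \mathbb{D} \to \mathbb{D}$ of degree $n$; such a map is automatically a finite Blaschke product, whose zeros are exactly the $\phi_-$-preimages of the zeros of $p$. Similarly $p \circ \phi_+ : \mathbb{D}_+ \to \mathbb{D}_+$ is proper of degree $n$, has no zeros in $\mathbb{D}_+$ (all $n$ zeros of $p$ lie in $\Omega_-(p)$), and has a single pole of order $n$ at $\infty$. By the maximum/minimum modulus principle applied to $(p\circ\phi_+)(z)/z^n$, this map equals $cz^n$ for some $|c|=1$; comparing the leading Laurent coefficient at $\infty$ and using positivity of the leading coefficient of $p$ together with $\phi_+'(\infty)>0$ forces $c=1$. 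Substituting $\phi_- = \phi_+ \circ k_{L(p)}$ on $\mathbb{T}$ then yields $B(z) = k_{L(p)}(z)^n$, i.e.\ $k_{L(p)} = B^{1/n}$.

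Next I would tackle the converse, where Theorem \ref{MainThm} does the heavy lifting. Given a Blaschke product $B$ of degree $n$, I would view it as holomorphic on a neighborhood of $\overline{\mathbb{D}}$ and verify the hypotheses of Theorem \ref{MainThm}: the boundary $\mathbb{T}$ is analytic, $B$ has no critical points on $\mathbb{T}$, $B(\mathbb{T}) = \mathbb{T}$ is a Jordan curve whose bounded face contains $0$, and the counts are $M = n$, $N = 0$. The theorem then produces an injective holomorphic $\phi : \mathbb{D} \to \mathbb{C}$ and a rational $r$ of degree $n$ with $B = r \circ \phi$, all of whose zeros lie in $\phi(\mathbb{D})$ and whose only pole is of order $n$ at $\infty$; hence $r$ is a polynomial $p$ of degree $n$. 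A short covering-degree count, using that $p|_{\phi(\mathbb{D})}$ is conjugate via $\phi$ to the degree-$n$ proper map $B:\mathbb{D}\to\mathbb{D}$, gives $p^{-1}(\mathbb{D}) = \phi(\mathbb{D})$, so $L(p)$ is proper with $\Omega_-(p) = \phi(\mathbb{D})$. Applying the forward direction to $p$ (with the constant $c=1$ again secured by normalizations) yields $k_{L(p)} = B^{1/n}$. To achieve positive leading coefficient, I would replace $p$ by $p \circ T$ for a suitable linear $T(z) = az+b$ with $a>0$; this scaling/translation leaves the lemniscate class and fingerprint unchanged.

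For uniqueness, suppose $p_1$ and $p_2$ both have proper lemniscates of degree $n$ with positive leading coefficient and the same fingerprint. The injectivity in Theorem \ref{WeldThm}, combined with translation/scaling invariance of the fingerprint, allows one to reduce to the case $L(p_1)=L(p_2)$ after composing one polynomial with a linear $T$; then both $p_j$ restrict to proper degree-$n$ self-maps of $\Omega_-$ over $\mathbb{D}$ whose associated Blaschke products agree up to an automorphism of $\mathbb{D}$, and unwinding this automorphism through the $z^n$-normalization on the exterior side forces $p_1$ and $p_2$ to differ by precomposition with an additional linear map of the required form.

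The main obstacle I anticipate is bookkeeping the normalizations and branch choices: making sure that the specific $n$-th root of $B$ delivered by the construction is exactly the fingerprint (and not merely an $n$-th root of some unimodular multiple of $B$, or a different branch), and that the linear map $T$ arising in the uniqueness argument is genuinely of the form $az+b$ with $a>0$. The topological and analytic backbone of the proof, by contrast, is essentially handed to us by Theorem \ref{MainThm}.
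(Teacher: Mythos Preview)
Your approach matches the paper's: the paper obtains Theorem~\ref{thmEKS} as the special case $\Gamma=\mathbb{T}$ of its pseudo-lemniscate theorem, and that proof runs exactly as you describe --- the forward direction via proper self-maps of $\mathbb{D}$ and $\mathbb{D}_+$, the converse by feeding $B$ (which is $\phi_-\circ B$ when $\Gamma=\mathbb{T}$) into Theorem~\ref{MainThm}, and uniqueness via Theorem~\ref{WeldThm}.

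There is one genuine slip. In the converse you propose to secure a positive leading coefficient by replacing $p$ with $p\circ T$ for $T(z)=az+b$, $a>0$. That cannot work: the leading coefficient of $p\circ T$ is $a^n$ times that of $p$, so its argument is unchanged when $a>0$. The correct fix uses the full affine freedom in the uniformizing map $G$ in the proof of Theorem~\ref{MainThm}: precompose $p$ with a rotation $T(z)=e^{i\theta}z$ and replace $\phi$ by $e^{-i\theta}\phi$, so that $p\circ T$ has leading coefficient $e^{in\theta}$ times the old one while $ (p\circ T)\circ(e^{-i\theta}\phi)=p\circ\phi=B$ is preserved. This is what the paper is tacitly invoking when it simply writes ``we can assume that the leading coefficient of $p$ is positive.''

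Your uniqueness sketch is workable but over-engineered, and it differs from the paper's. Once Theorem~\ref{WeldThm} reduces you to $L(p_1)=L(p_2)$ (the linear map $T$ with $a>0$ preserves positivity of the leading coefficient), there is no ``automorphism of $\mathbb{D}$'' to unwind: fixing a single pair $\phi_-,\phi_+$, the forward computation gives $p_j\circ\phi_+(z)=z^n$ on $\mathbb{D}_+$ for both $j$, hence $p_1=p_2$ immediately. The paper instead works on the exterior with the quotient $\psi=(\phi_+^{-1}\circ q)/(\phi_+^{-1}\circ p\circ T)$ and the maximum modulus principle, a clean alternative leading to the same conclusion.
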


This theorem was first proved in \cite{EKS}. A simpler proof using the uniformization theorem was later given in~\cite{YOU}, together with a generalization to rational lemniscates.

Our goal now is to use Theorem \ref{MainThm} to generalize Theorem \ref{thmEKS} to polynomial pseudo-lemniscates. First, we need some definitions. Let $\Gamma$ be an analytic Jordan curve in the plane and let $\Omega_{-}$, $\Omega_{+}$ and $\phi_{-}:\mathbb{D} \to \Omega_-$, $\phi_+: \mathbb{D}_+ \to \Omega_+$ be as before. Recall that $\phi_+$ is normalized so that $\phi_+(\infty)=\infty$ and $\phi_+'(\infty)>0$. We also assume that $0 \in \Omega_-$ and that $\phi_-$ is normalized by $\phi_-(0)=0$ and $\phi_-'(0)>0$. Denote by $k_{\Gamma}:=\phi_{+}^{-1} \circ \phi_{-} : \mathbb{T} \to \mathbb{T}$ the fingerprint of $\Gamma$.

Now, let $p$ be a complex polynomial of degree $n$. Assume that the leading coefficient of $p$ is positive. We define the \textit{pseudo-lemniscate} of $p$ corresponding to the curve $\Gamma$ by
$$L(p,\Gamma):=\{z \in \RiemannSphere : p(z) \in \Gamma\} = p^{-1}(\Gamma).$$
Note that with this definition, the original lemniscate $L(p)$ is just $L(p,\mathbb{T})$. We also define
$$\Omega_-(p,\Gamma):= p^{-1}(\Omega_-)$$
and
$$\Omega_+(p,\Gamma):= p^{-1}(\Omega_+),$$
so that $\Omega_-(p)=\Omega_-(p,\mathbb{T})$ and $\Omega_+(p)=\Omega_+(p,\mathbb{T}).$

As before, the maximum modulus principle implies that $\Omega_+(p,\Gamma)$ is connected. We say that the pseudo-lemniscate $L(p,\Gamma)$ is \textit{proper} if $\Omega_-(p,\Gamma)$ is connected.

The following proposition characterizes exactly when this happens. It was proved in \cite[Propositon 2.1]{EKS} in the case $\Gamma=\mathbb{T}$.

\begin{proposition}
The following are equivalent:
\begin{enumerate}[\rm(i)]
\item The pseudo-lemniscate $L(p,\Gamma)$ is proper.
\item All of the $n-1$ critical values of $p$ in $\mathbb{C}$ (counted with multiplicity) belong to $\Omega_{-}$.
\end{enumerate}
In this case, the pseudo-lemniscate $L(p,\Gamma)$ is an analytic Jordan curve.
\end{proposition}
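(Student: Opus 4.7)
The strategy is to apply Riemann--Hurwitz to the proper restrictions of $p$ over the two faces of $\Gamma$. Regard $p : \RiemannSphere \to \RiemannSphere$ as a degree $n$ branched cover, whose total ramification $2n-2$ splits as a contribution of $n-1$ at $\infty$ together with the $n-1$ finite critical points counted with multiplicity. Since $p^{-1}(\Omega_+) = \Omega_+(p,\Gamma)$, every regular value in $\Omega_+$ has all $n$ of its preimages in $\Omega_+(p,\Gamma)$, so the restriction $p|_{\Omega_+(p,\Gamma)} : \Omega_+(p,\Gamma) \to \Omega_+$ is proper of degree $n$; this set is already known to be connected. If (i) holds, the same reasoning makes $p|_{\Omega_-(p,\Gamma)} : \Omega_-(p,\Gamma) \to \Omega_-$ proper of degree $n$.

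For the implication (ii) $\Rightarrow$ (i), together with the Jordan and analyticity claim, I would apply Riemann--Hurwitz to $p|_{\Omega_+(p,\Gamma)}$. Under (ii), the only critical point of $p$ in $\Omega_+(p,\Gamma)$ is $\infty$, with local degree $n$, so
$$\chi(\Omega_+(p,\Gamma)) = n\,\chi(\Omega_+) - (n-1) = n - (n-1) = 1.$$
A connected open subset of $\RiemannSphere$ of Euler characteristic $1$ is a topological disk, so $L(p,\Gamma) = \partial \Omega_+(p,\Gamma)$ is a Jordan curve and its complementary face $\Omega_-(p,\Gamma)$ is connected, which proves properness. The curve is analytic since $\Gamma$ is analytic and $p$ has no critical points on $p^{-1}(\Gamma)$.

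For the implication (i) $\Rightarrow$ (ii), apply Riemann--Hurwitz instead to $p|_{\Omega_-(p,\Gamma)}$. Letting $B_-$ denote the total ramification of $p$ inside $\Omega_-(p,\Gamma)$, one has $B_- \leq n-1$ and
$$\chi(\Omega_-(p,\Gamma)) = n\,\chi(\Omega_-) - B_- = n - B_-.$$
Since every connected proper open subset of $\RiemannSphere$ has Euler characteristic at most $1$, this forces $B_- \geq n-1$, hence $B_- = n-1$. Thus every finite critical point of $p$ (with multiplicity) lies in $\Omega_-(p,\Gamma)$, which is exactly condition (ii).

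The main technical point to watch is the justification of Riemann--Hurwitz: one needs the restrictions of $p$ to be proper and the Euler characteristic bound $\chi \leq 1$ for the relevant connected open subsets of $\RiemannSphere$. Properness is automatic because $p$ is proper on $\RiemannSphere$ and each $\Omega_\pm(p,\Gamma)$ is a full preimage of an open subset. Since $p$ has only finitely many critical points, the branching in each restriction is finite, so each $\Omega_\pm(p,\Gamma)$ is finitely connected and the bound $\chi \leq 1$ is standard.
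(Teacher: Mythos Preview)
Your proof is correct and follows essentially the same Riemann--Hurwitz strategy as the paper. The only cosmetic difference is that the paper applies Riemann--Hurwitz once to the compact surface $X=\overline{\Omega_+(p,\Gamma)}$ (reading off the number $k$ of components of $\Omega_-(p,\Gamma)$ from $\chi(X)=2-k$) and thereby gets both implications simultaneously, whereas you work with the open preimages $\Omega_\pm(p,\Gamma)$ and treat the two directions separately; your version has the mild advantage that it avoids any worry about whether $X$ is a genuine manifold with boundary in the presence of critical points on $L(p,\Gamma)$.
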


\begin{proof}
Let $X:=\overline{\Omega_+(p,\Gamma)}$. Then $X$ is connected. Let $k$ denote the number of components of $\Omega_-(p,\Gamma)$. Then $X$ is topologically a sphere with $k$ disks removed, so its Euler characteristic is equal to $2-k$. The Riemann-Hurwitz formula therefore gives
\begin{eqnarray*}
2-k&=&\operatorname{deg}(p)\chi(\overline{\Omega_+})-|\{\mbox{critical points of}\,\, p \,\, \mbox{in} \,\, X\}|\\
&=& n-|\{\mbox{critical points of}\,\, p \,\, \mbox{in} \,\, X\}|,\\
\end{eqnarray*}
so that $k=1$ if and only if $p$ has $n-1$ critical points in $X$. But $p$ has $2n-2$ critical points in $\RiemannSphere$ (including the critical point at $\infty \in X$ of multiplicity $n-1$) and therefore $\Omega_-(p,\Gamma)$ is connected if and only if $p$ has $n-1$ critical points in $\Omega_-(p,\Gamma)$, which is what we had to prove. Finally, if this is the case, then $L(p,\Gamma)$ is an analytic Jordan curve since the restriction $p:L(p,\Gamma) \to \Gamma$ is non-singular.

\end{proof}

Assume now that the pseudo-lemniscate $L(p,\Gamma)$ is proper. Let $\phi_{p-} : \mathbb{D} \to \Omega_-(p,\Gamma)$ and $\phi_{p+}:\mathbb{D}_+ \to \Omega_+(p,\Gamma)$ be conformal maps with $\phi_{p+}(\infty)=\infty$ and $\phi_{p+}'(\infty)>0$. Also, let $k_p:=\phi_{p+}^{-1} \circ \phi_{p-}$ be the fingerprint of $L(p,\Gamma)$. The following theorem characterizes $k_p$ in terms of the fingerprint $k_\Gamma$ of $\Gamma$.

\begin{theorem}
The fingerprint $k_p:\mathbb{T} \to \mathbb{T}$ of the pseudo-lemniscate $L(p,\Gamma)$ is given by
$$k_p=(k_\Gamma \circ B)^{1/n},$$
where $B$ is a Blaschke product of degree $n$ whose zeros are the preimages under $\phi_{p-}$ of the zeros of $p$, counted with multiplicity. Conversely, given any Blaschke product $B$ of degree $n$, there is a polynomial $p$ of the same degree with positive leading coefficient such that the pseudo-lemniscate $L(p,\Gamma)$ is proper and has $k_p=(k_\Gamma \circ B)^{1/n}$ as its fingerprint. Moreover, the polynomial $p$ is unique up to precomposition with a linear map of the form $T(z)=az+b$, where $a>0$ and $b \in \mathbb{C}$.
\end{theorem}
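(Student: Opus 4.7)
My plan is to follow the strategy of \cite{YOU} for Theorem~\ref{thmEKS}, with Theorem~\ref{MainThm} playing the role served there by the uniformization theorem.

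For the direct part, I would first observe that, since $L(p,\Gamma)$ is proper, both $p : \Omega_-(p,\Gamma) \to \Omega_-$ and $p : \Omega_+(p,\Gamma) \to \Omega_+$ are proper holomorphic maps of degree $n$. Conjugating by the normalized Riemann maps, $B := \phi_-^{-1}\circ p\circ \phi_{p-}$ becomes a proper holomorphic self-map of $\mathbb{D}$ of degree $n$, hence a Blaschke product of degree $n$ whose zeros are, by construction, the $\phi_{p-}$-preimages of the zeros of $p$. The analogous conjugate $\phi_+^{-1}\circ p\circ \phi_{p+} : \mathbb{D}_+ \to \mathbb{D}_+$ is proper and fixes $\infty$ with multiplicity $n$, so it is $\lambda z^n$ with $|\lambda|=1$; the three positive-derivative normalizations (positive leading coefficient of $p$, $\phi_+'(\infty)>0$, and $\phi_{p+}'(\infty)>0$) then force $\lambda=1$. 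Applying $p$ to the relation $\phi_{p-}(z)=\phi_{p+}(k_p(z))$ on $\mathbb{T}$ gives
\[
\phi_-(B(z)) \;=\; p(\phi_{p-}(z)) \;=\; p(\phi_{p+}(k_p(z))) \;=\; \phi_+(k_p(z)^n),
\]
and composing with $\phi_+^{-1}$ yields $k_\Gamma\circ B = k_p^n$, i.e., $k_p = (k_\Gamma\circ B)^{1/n}$.

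For the converse, given $B$ I would set $f := \phi_-\circ B$, which is holomorphic on a neighborhood of $\overline{\mathbb{D}}$ (as $\Gamma$ is analytic), has no critical points on $\mathbb{T}$ (a logarithmic-derivative calculation shows that no finite Blaschke product does), and satisfies $f(\mathbb{T})=\Gamma$, a Jordan curve whose bounded face $\Omega_-$ contains $0$. Theorem~\ref{MainThm} and its Remark furnish a conformal map $\phi:\mathbb{D}\to\mathbb{C}$ and a polynomial $p$ of degree $n$ with $f = p\circ\phi$. Since $p$ has global degree $n$ and restricts to a degree $n$ branched cover $\phi(\mathbb{D})\to\Omega_-$, a density argument on regular values forces $p^{-1}(\Omega_-) = \phi(\mathbb{D})$; hence $\Omega_-(p,\Gamma) = \phi(\mathbb{D})$, $L(p,\Gamma) = \phi(\mathbb{T})$ is a Jordan curve, and the pseudo-lemniscate is proper. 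Pre-composing $p$ with a suitable linear change of variable $T(z) = cz$ then makes its leading coefficient positive. Repeating the direct-part computation with $\phi$ in place of $\phi_{p-}$ (using $\phi_-^{-1}\circ p\circ \phi = B$, which holds by construction) yields $(\phi_{p+}^{-1}\circ\phi)^n = k_\Gamma\circ B$; since $\phi_{p+}^{-1}\circ\phi = k_p\circ(\phi_{p-}^{-1}\circ\phi)$ differs from $k_p$ by precomposition with the automorphism $\phi_{p-}^{-1}\circ\phi$ of $\mathbb{D}$, this identifies $k_p$ with $(k_\Gamma\circ B)^{1/n}$ up to precomposition with an element of $\operatorname{Aut}(\mathbb{D})$, which is the equivalence inherent in the notion of fingerprint.

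For uniqueness, suppose $p_1,p_2$ both satisfy the conclusion for the same $B$. Then $L(p_1,\Gamma)$ and $L(p_2,\Gamma)$ have the same fingerprint, so by the injectivity of $\mathcal{F}$ (Theorem~\ref{WeldThm}) they differ by a translation and scaling $T(z)=az+b$ with $a>0$. Replacing $p_2$ by $p_2\circ T$ reduces to the case $L(p_1,\Gamma)=L(p_2,\Gamma)$, in which the two polynomials share $\phi_{p-}$ and $k_p$, hence the same Blaschke product by the direct part, which forces $p_1 = p_2$ on $\Omega_-(p_1,\Gamma)$ and so everywhere. The main subtlety of the proof is the normalization bookkeeping in the converse: Theorem~\ref{MainThm} produces $(p,\phi)$ with no canonical normalization, and making $\phi$ coincide with $\phi_{p-}$ while simultaneously making $p$ have positive leading coefficient is in general only possible up to an additional precomposition of $B$ by an automorphism of $\mathbb{D}$---which is precisely the equivalence built into the definition of a fingerprint and so does not affect the stated identity.
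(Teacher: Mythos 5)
Your direct part and converse are essentially the paper's argument: you conjugate $p$ by the interior and exterior Riemann maps to get a degree-$n$ Blaschke product and $z\mapsto z^n$ respectively (with the three positivity normalizations forcing the unimodular constant to be $1$), and for the converse you apply Theorem~\ref{MainThm} and its Remark to $f=\phi_-\circ B$, checking the hypotheses exactly as the paper does; your worry about reconciling $\phi$ with $\phi_{p-}$ is in fact unnecessary, since the paper leaves $\phi_{p-}$ unnormalized and simply takes it to be the map furnished by Theorem~\ref{MainThm}, so the identity $k_p^n=k_\Gamma\circ B$ holds exactly rather than only up to $\operatorname{Aut}(\mathbb{D})$ (though your version is harmless, since the fingerprint is only an equivalence class). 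The one genuinely different step is uniqueness: after invoking the injectivity of $\mathcal{F}$ (Theorem~\ref{WeldThm}) to reduce to $L(p_1,\Gamma)=L(p_2\circ T,\Gamma)$, the paper compares the two polynomials through the \emph{exterior} maps, showing the ratio $\psi=(\phi_+^{-1}\circ q)/(\phi_+^{-1}\circ p\circ T)$ is a unimodular, hence positive, constant by the maximum modulus principle; you instead compare through the \emph{interior} maps, using the shared $\phi_{p-}$ and $k_p$ together with injectivity of $k_\Gamma$ to conclude $B_1=B_2$, hence $p_1=p_2\circ T$ on $\Omega_-(p_1,\Gamma)$ and everywhere by the identity theorem (note your argument still needs the positive leading coefficient, preserved under $T$, to get the exact identity $k_{p_i}^n=k_\Gamma\circ B_i$). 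Both closings are correct; yours recycles the direct-part computation and avoids the auxiliary function $\psi$, while the paper's is self-contained on the exterior side and does not pass through the Blaschke products at all.
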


Note that if $\Gamma=\mathbb{T}$, then $k_\Gamma$ is the identity on $\mathbb{T}$ and the theorem reduces to Theorem \ref{thmEKS}.

\begin{proof}
To prove the first statement, note that $\phi_-^{-1} \circ p \circ \phi_{p-}$ is a degree $n$ proper holomorphic map of $\mathbb{D}$ onto itself, and hence must be a Blaschke product $B$ of degree $n$. Since $\phi_-(0)=0$, the zeros of $B$ are precisely the preimages under $\phi_{p-}$ of the zeros of $p$. Similarly, the map $\phi_+^{-1} \circ p \circ \phi_{p+}$ is also a Blaschke product, having all of its poles at $\infty$ in view of the normalizations of $\phi_+$ and $\phi_{p+}$. Hence $\phi_+^{-1} \circ p \circ \phi_{p+}(z)=cz^n$ for some unimodular constant $c$. Since $\phi_{p+}'(\infty), \phi_+'(\infty)>0$ and the leading coefficient of $p$ is positive, we get that $c=1$, thus $\phi_+^{-1} \circ p \circ \phi_{p+}=S$ where $S(z)=z^n$. Composing both sides of the equation $\phi_{p+} \circ k_p = \phi_{p-}$ by $\phi_{+}^{-1} \circ p$, we get
$$S \circ k_p = \phi_+^{-1} \circ p \circ \phi_{p-} = \phi_+^{-1} \circ \phi_- \circ B = k_\Gamma \circ B,$$
which proves the first statement.

To prove the second statement, apply the remark following the proof of Theorem \ref{MainThm} to $f=\phi_- \circ B$ to obtain a polynomial $p$ of degree $n$ and a conformal map $\phi_{p-}$ on $\mathbb{D}$ such that
$$p \circ \phi_{p-}=\phi_- \circ B.$$

We can assume that the leading coefficient of $p$ is positive. Since $p$ and $B$ have the same degree, we have
$$\phi_{p-}(\mathbb{D}) = p^{-1}(\Omega_-) = \Omega_-(p,\Gamma)$$
and thus the pseudo-lemniscate $L(p,\Gamma)$ is proper. Now, let $\phi_{p+}:\mathbb{D}_+ \to \Omega_+(p,\Gamma)$ be the Riemann map normalized by $\phi_{p+}(\infty)=\infty$ and $\phi_{p+}'(\infty)>0$. As in the first part of the proof, we get that $\phi_+^{-1} \circ p \circ \phi_{p+}=S$ and
$$S \circ k_p = \phi_+^{-1} \circ p \circ \phi_{p-} = \phi_+^{-1} \circ \phi_- \circ B = k_\Gamma \circ B.$$
This proves the second part of the theorem.

Finally, to prove the uniqueness statement, assume that $q$ is a polynomial of degree $n$ with positive leading coefficient such that $L(q,\Gamma)$ is proper and $k_q=(k_\Gamma \circ B)^{1/n}$. By Theorem \ref{WeldThm}, we have $L(p,\Gamma)=T(L(q,\Gamma))$ for some linear map $T(z)=az+b$, where $a>0$ and $b \in \mathbb{C}$. This implies that $L(q,\Gamma)=L(p\circ T,\Gamma)$, i.e. the polynomials $q$ and $p \circ T$ have the same pseudo-lemniscate. Moreover, both these polynomials have degree $n$ so that the map
$$\psi:=\frac{\phi_+^{-1} \circ q}{\phi_+^{-1} \circ p \circ T}$$
on $\Omega_+(q,\Gamma)$ has a removable singularity at $\infty$. It is also non-vanishing and maps $L(q,\Gamma)$ into the unit circle. Hence $\psi$ must be a unimodular constant, by the maximum modulus principle applied to both $\psi$ and $1/\psi$. Since the leading coefficients of $q$ and $p\circ T$ have the same argument, it follows that $\psi$ must be positive and therefore equal to one. This implies that $q=p \circ T$, which proves uniqueness.

This completes the proof of the theorem.

\end{proof}

\acknowledgments{The authors thank the anonymous referees for helpful suggestions.}

\bibliographystyle{amsplain}

\end{document}